\documentclass[11pt,leqno]{amsart}
\usepackage[legalpaper, margin=1in]{geometry}
\usepackage{graphicx}
\graphicspath{ {./images/} }
\usepackage{setspace}
\spacing{1}
\usepackage{mathptmx}
\usepackage{amssymb}
\usepackage[dvipsnames]{xcolor}
\usepackage{tabto}
\usepackage[scaled]{beramono}
\usepackage{listings}
\usepackage{appendix}
\usepackage{tikz}
\usepackage{caption}
\usepackage{blindtext}
\usepackage{enumitem}
\usepackage{lmodern}
\usepackage{multicol}
\lstset{%
    language=C++,
    basicstyle=\footnotesize,
    captionpos=b,
    otherkeywords={::},
}
\usepackage{hyperref}
\hypersetup{
    colorlinks=true,
    linkcolor=blue,
    filecolor=magenta,      
    urlcolor=cyan,
    pdftitle={Overleaf Example},
    pdfpagemode=FullScreen,
    }

\usepackage{fancyhdr}

\usepackage{amsthm}
\theoremstyle{plain}
\newtheorem{theorem}{Theorem}[section]
\newtheorem{definition}[theorem]{Definition}
\newtheorem{example}[theorem]{Example}
\newtheorem{lemma}[theorem]{Lemma}

\newtheorem{corollary}[theorem]{Corollary}

\vspace{-10cm}
\lhead{blah}
\usepackage{multicol}

\begin{document}
\title{Minimum Decomposition on Maxmin Trees} 
\author[Emmy Huang and Ray Tang]{Emmy Huang and Ray Tang}
\date{\today}
\subjclass[2020]{Primary 05A05; Secondary 05A15, 05A30.}
\keywords{Maxmin trees, q-Eulerian polynomials, permutations.}
\address{Lexington High School, Lexington, MA 02421}
\email{24stu178@lexingtonma.org}
\address{Massachusetts Institute of Technology, Department of Computer Science, Cambridge, MA 02139}
\email{rayt724@mit.edu}
\maketitle


\begin{abstract}
    Maxmin trees are trees that consist of nodes that are either local minimums or maximums. Such trees were first studied by Postnikov. Later Dugan, Glennon, Gunnells, and Steingrimsson introduced the concept of weight to these trees and proved a bijection between maximum weight maxmin trees and permutations, defining weights for permutations. In addition, the q-Eulerian polynomial $E_n(x, q)$ is defined which relates descents and weights of permutations. This polynomial was later proven to exhibit a stabilization phenomenon by Agrawal et al. Extracting the formal power series $W_d(t)$ from the stabilization of these coefficients, $W_d(t)$ was conjectured to partially correspond to A256193. In our paper, we introduce a process called minimum decomposition to help us better understand maxmin trees. Using minimum decomposition, we present a new way to calculate the weight of different maxmin trees and prove the bijection between the coefficients of $W_d(t)$ and A256193.
\end{abstract}

\section{Introduction}

This paper studies how Maxmin Trees can be better understood through Minimum Decomposition. 

Tiered trees are trees with additional requirements on their labels. In studying tiered trees, Dugan, Glennon, Gunnells, and Steingrímsson \cite{2} defined the notion of weight for tiered trees which come up in the various geometric counting problems including the counting of certain representations of supernova quivers and torus orbits. These trees are generalizations of maxmin trees, which they study throughout their paper. Specifically, they prove a bijection between weight 0 maxmin trees and permutations. They also introduce multiple open questions about weights of permutations.

These have to do with the q-Eulerian Polynomial $E_n(x, q)$ defined in \cite{2} which extends the q-Eulerian Polynomial to include information about descents in the permutation. As conjectured in \cite{2} and proved in \cite{3}, the coefficients of these polynomials converge to a fixed sequence. Thus, a power series can be extracted, where the coefficients, as noted in \cite{2} correspond to the numbers in the OEIS sequence A256193. This sequence $T(n, k)$ counts the number of partitions of $n$ with $k$ parts of the second kind.

In our paper, we study maxmin trees and offer two central results concerning the q-Eulerian Polynomial and weights of permutations. Specifically, we prove the bijection between the coefficients of the power series and $T(n, k)$ conjectured in \cite{2}. We also give two new ways to find the weight of a permutation, including one iterative approach that can be implemented with lower complexity and another based on a new perspective to view maxmin trees. We include implementations of these algorithms in C++ on GitHub.

Thus, our paper is formatted as follows: We first introduce the general terminology of maxmin trees in section 2. We then consider weights of permutations in section 3, introducing the concept of minimum decomp trees. In section 4, we discuss the stabilization of $E_n(x,q)$ and use this information along with minimum decomp trees in section 5 to prove the bijection between $T(n, k)$ and the coefficients of $W_d(t)$.

\section{Definitions}

\begin{definition}
    \normalfont Let $S_n$ be the \textit{symmetric group} whose elements are all permutations of length $n$.
\end{definition}

Then $S_n$ consists of $n!$ different permutations of the set $[\![n]\!] = {1, \ldots, n}$. For example, the elements of $S_3$ are $\{1\ 2\ 3, 2\ 1\ 3, 1\ 3\ 2, 2\ 3\ 1, 3\ 1\ 2, 3\ 2\ 1\}$. A permutation $\sigma \in S_n$ is generally denoted by $a_1 a_2 a_3 \ldots a_n$.

\begin{definition}
    \normalfont A tree, $T$, is a graph of $n$ nodes and $n - 1$ edges with exactly $1$ path between every pair of nodes. Let $|T| = n$.
\end{definition}

\begin{definition}
    \normalfont A maxmin tree, $T$, is a tree of $n$ nodes with a labeling of each node and a well-ordering between the labels of each node. Furthermore, every node is either greater than each of its neighbors, a local max, or less than each of its neighbors, a local min.
\end{definition}

\begin{definition}
    \normalfont We define $T_i$ to be the subtree of $i$ in a maxmin tree, $T$, where $T_i = \{x \mid x \in T, x >= i,\forall j$ on the path between $x$ and $i$ in $T$, $j$ is in $T_i$\}
\end{definition}

\begin{definition}
    \normalfont We define des$(T)$ to be the number of local maximums in $T$.
\end{definition}

\begin{definition}
    \normalfont We define the \textit{weight of a maxmin tree} $T$ recursively:
\begin{enumerate}
    \item If $T$ consists of one node, $w(T)$ = 0.
    \item Otherwise, we let $m$ be the minimal node, and $T_1, \ldots, T_j$ be the trees created by deleting $m$ from $T$. Let $u_1, \ldots, u_j$ be the vertex in $T_1, \ldots, T_j$ respectively connected to $m$. Then we have:
    \begin{equation*}
        w(T) = \sum _{i=1}^{j}(d_i + w(T_i))
    \end{equation*}
where $d_i$ represents the number of vertices in $T_i$ that could be in place of $u_i$. Specifically, it represents the number of local maxes that are smaller than $u_i$. $d_i$ can also be referred to as the number of descents in $T_i$.
\end{enumerate}

\end{definition}

\begin{definition}
    \normalfont The \textit{weight of a permutation} $\sigma \in S_n$ is the weight of its maximum weight maxmin tree.
\end{definition}

To calculate the weight of a permutation $\sigma \in S_n$, we construct its maximum weight maxmin tree. It was proved in \cite{2} that there is a bijection between the permutations in $S_n$ and weight 0 maxmin trees with $n+1$ vertices. The paper describes an algorithm to construct 0 weight trees from a permutation. Similarly, to construct a maximum weight maxmin tree from a permutation $\sigma \in S_n$:
\begin{enumerate}
    \item We add the value $n+1$ to the end of the permutation $\tau$ to create a permutation $\tau \in S_{n+1}$ that corresponds to a maximum weight tree.
    \item Find the minimum value 1 in the permutation $\tau$. We split the permutation into $\tau_l \cdot 1 \cdot \tau_r$.
    \item For $\tau_l$, we start from the left and split it into $\tau_1 \cdot \tau_2$ where the rightmost element of $\tau_1$ is the global maximum of $\tau_l$. We continue this process recursively on $\tau_2$ and so on until $\tau_l = \tau_1 \cdot \tau_2 \cdots \tau_n$.
    \item We are left with the sequence $\tau_1 \cdot \tau_2 \cdots \tau_n \cdot 1 \cdot \tau_r$ where the rightmost element of each subtree is the global maximum. These should be connected the minimum element in the permutation
    \item Repeat this process for each subtree to find the maximum weight tree.
\end{enumerate}
We then use the weight formula to calculate the weight of this tree, and thus we find the weight of a permutation.

\section{Weights of Maxmin Trees and Permutations}

\begin{theorem}
    \normalfont The weight of a max-weight tree of a permutation is $w(T_m)=\sum_{x} \text{des}(T_x) - 1+w(T_x))$
\end{theorem}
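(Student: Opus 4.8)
The plan is to obtain the identity directly from the explicit construction of the maximum-weight maxmin tree recalled above, whose decisive feature is that every subtree split off when a minimal node is deleted is attached to that node at \emph{its own maximum vertex}. I would first isolate this as a structural lemma: if $m$ is the minimal node of the max-weight tree $T_m$ and $T_1,\dots,T_j$ are the components of $T_m-m$, with $u_i$ the vertex of $T_i$ adjacent to $m$, then each $u_i$ carries the largest label occurring in $T_i$. This is precisely what Steps~3--4 of the construction enforce: the block $\tau_l$ is cut so that the right endpoint of each piece $\tau_k$ is the maximum of that piece, and those right endpoints are exactly the vertices joined to $m$; the recursive clause (Step~5) then propagates the property to $\tau_r$ and to every deeper stage of the decomposition.

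Granting the lemma, the theorem follows in a line. Since $u_i$ is the maximum label in $T_i$, it exceeds each of its neighbors there, so it is a local maximum of $T_i$, and every \emph{other} local maximum of $T_i$ is strictly smaller than $u_i$. Hence the number $d_i$ from the weight recursion---the number of local maxima of $T_i$ lying below $u_i$---equals $\text{des}(T_i)-1$. Substituting into $w(T_m)=\sum_{i=1}^{j}\bigl(d_i+w(T_i)\bigr)$ gives $w(T_m)=\sum_{i=1}^{j}\bigl(\text{des}(T_i)-1+w(T_i)\bigr)$, the asserted identity with $x$ running over the components $T_i$ of $T_m-m$. If one wants the fully unfolded version, with $x$ ranging over all subtrees $T_x$ produced during the decomposition, I would close by induction on $|T_m|$: each $T_i$ is again the maximum-weight tree of a shorter permutation (up to order-isomorphic relabeling, under which both $w(\cdot)$ and $\text{des}(\cdot)$ are unchanged), so the inductive hypothesis expands $w(T_i)$ and the contributions reassemble into the full sum; the base case is the convention $w(T)=0$ for a single node.

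The real work---and the only genuine obstacle---is the structural lemma: one must check carefully, straight from the construction, that the attachment vertices are the subtree maxima at \emph{every} level of the recursion, and, for a self-contained treatment, that the tree produced by that construction truly has maximum weight among all maxmin trees representing the permutation (the latter being the fact drawn from \cite{2}). Everything after the lemma is routine bookkeeping with the recursive definition of weight.
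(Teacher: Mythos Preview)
Your proposal is correct and follows essentially the same approach as the paper: the key observation in both is that the max-weight construction attaches each subtree $T_i$ to the minimal node at the subtree's own maximum vertex, forcing $d_i=\text{des}(T_i)-1$. The paper compresses this into a single sentence (``By choosing the node with the largest label at each step, we add $\text{des}(T_x)-1$ to the weight at each step''), whereas you spell out the structural lemma and the substitution explicitly; your version is more complete but not a different argument.
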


\begin{proof}
By choosing the node with the largest label at each step, we add des$(T_x)-1$ to the weight at each step. 
\end{proof}

\begin{theorem}
\normalfont For a max weight tree $T$, if $m$ is the node with the smallest label in $T$:
    $$w(T_m) = \sum_{x} \text{des}(T_x) - \lvert T_m \rvert + 1$$
where $T_x$ enumerates all the trees connected to $T_m$.
\end{theorem}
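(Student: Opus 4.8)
The plan is to get this from Theorem 3.1 by unrolling its recursion and then counting. Let $m$ be the minimum-labelled vertex of $T_m$ and let $T_1,\dots,T_j$ be the connected components of $T_m$ with $m$ deleted; Theorem 3.1 gives $w(T_m)=\sum_{i=1}^{j}\bigl(\text{des}(T_i)-1+w(T_i)\bigr)$, the $-1$ being the saving from attaching $m$ to the largest available vertex of each $T_i$. Applying Theorem 3.1 in the same way to each $w(T_i)$, and then to the components that those produce, and so on down to single-vertex trees (for which $w=0$), every term of the form $w(\cdot)$ is eliminated and one is left with
\begin{equation*}
  w(T_m)=\sum_{x}\bigl(\text{des}(T_x)-1\bigr),
\end{equation*}
where $x$ now ranges over \emph{every} subtree that appears at some stage of this recursive ``minimum decomposition'' of $T_m$. (This family --- not just the $T_i$ adjacent to $m$ --- is what ``all the trees connected to $T_m$'' must mean; single-vertex subtrees are included and harmlessly contribute $\text{des}-1=0$, using the convention that the lone vertex of a one-vertex tree is vacuously a local maximum.)

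With this in hand the proof reduces to one counting statement: the total number $N$ of subtrees that arise in the decomposition equals $\lvert T_m\rvert-1$. Granting this, $\sum_x(\text{des}(T_x)-1)=\sum_x\text{des}(T_x)-N=\sum_x\text{des}(T_x)-\lvert T_m\rvert+1$, which is exactly the asserted formula. I would establish $N=\lvert T_m\rvert-1$ by an edge-severing argument: each time a vertex of current degree $d$ is deleted, exactly $d$ edges are cut and exactly $d$ new subtrees are born; and over the whole decomposition each of the $\lvert T_m\rvert-1$ edges of $T_m$ is cut exactly once, since an edge can only be cut when one of its endpoints is deleted and every vertex is deleted exactly once. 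Summing, the number of subtrees created equals the number of edges cut, i.e.\ $\lvert T_m\rvert-1$.

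It is probably cleanest to fold both moves into a single induction on $\lvert T_m\rvert$: the base case $\lvert T_m\rvert=1$ reads $0=(\text{empty sum})-1+1$; in the inductive step one plugs the induction hypothesis for each $T_i$ into Theorem 3.1 and simplifies, the identity $\sum_i\lvert T_i\rvert=\lvert T_m\rvert-1$ making the constant terms collapse to $-\lvert T_m\rvert+1$ while the $\text{des}$-contributions from all levels merge into the single sum $\sum_x\text{des}(T_x)$.

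The part I expect to carry the real weight, as opposed to being routine algebra, is making precise which subtrees the sum ranges over and then proving $N=\lvert T_m\rvert-1$ (equivalently, exhibiting the bijection between the subtrees produced and the edges of $T_m$). Everything after that --- the cancellation of the $\pm 1$'s and the bookkeeping of the $\text{des}$ terms --- is mechanical, modulo being careful about the one-vertex convention that makes the telescoping exact.
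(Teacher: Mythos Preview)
Your proposal is correct, and the induction you describe in your final paragraph---plug the inductive hypothesis for each $T_i$ into Theorem~3.1 and use $\sum_i |T_i|=|T_m|-1$ to collapse the constants---is exactly the route the paper takes; in fact the paper's inductive step is left as little more than a sketch, so your write-up is the more complete of the two. Your edge-severing argument for $N=|T_m|-1$ is a nice direct alternative not in the paper; one small correction there: it is not true that \emph{every} vertex is deleted (local maxima end the recursion as one-vertex trees and are never removed), but since every edge of a maxmin tree has its smaller endpoint a local minimum, that endpoint \emph{is} eventually deleted, so each edge is still cut exactly once and your count of $|T_m|-1$ subtrees stands.
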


\begin{proof}
We prove this with induction over the size of the tree. For size 1 trees, there are no trees connected to the single vertex, the size is 1, and the weight is 0, which validates the formula for the base case. Now, we assume the formula is correct for all trees of size $n$ and less. Using the definition of weight for a max weight tree, 
$$w(T_m)=\sum_{x} \text{des}(T_x) - 1+w(T_x)) = \sum_{x} \text{des}(T_x) - 1+w(T_x))$$.
\end{proof}

\begin{corollary}
    \normalfont The weight of a permutation is the sum of the number of descents in each of its subtrees $- \, n$.
\end{corollary}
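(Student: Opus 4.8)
The plan is to obtain the corollary as an immediate specialization of the preceding theorem (the one asserting $w(T_m) = \sum_x \text{des}(T_x) - \lvert T_m \rvert + 1$ for a max weight tree with smallest-labeled node $m$), combined with the bijection recalled after the definition of the weight of a permutation. First I would recall that, by definition, the weight of a permutation $\sigma \in S_n$ is the weight $w(T)$ of its maximum weight maxmin tree $T$, and that by the construction described in Section 2 this tree has exactly $n+1$ vertices, since we append the value $n+1$ to $\sigma$ before building $T$.

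Next I would take $m$ to be the node of $T$ carrying the smallest label (namely $1$); because $T$ is itself a max weight tree, the preceding theorem applies with $T_m = T$, so that $\lvert T_m \rvert = n+1$. Substituting this into $w(T_m) = \sum_x \text{des}(T_x) - \lvert T_m \rvert + 1$ gives $w(T) = \sum_x \text{des}(T_x) - (n+1) + 1 = \sum_x \text{des}(T_x) - n$, which is precisely the asserted identity. Here I would also note that, once the recursion defining $w$ is fully unwound, the family of trees $T_x$ occurring in the theorem is exactly the family of all subtrees of $T$ produced by iteratively deleting minimal nodes, so "the sum of the number of descents in each of its subtrees" is an unambiguous restatement of $\sum_x \text{des}(T_x)$.

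I expect the only point needing care to be the index bookkeeping: the relevant tree has $n+1$ vertices rather than $n$ because of the appended maximal element, and it is exactly this extra vertex that converts the "$-\lvert T_m \rvert + 1$" of the theorem into the "$-n$" of the corollary. No new estimates or constructions are required; once the size of the max weight tree and the meaning of "subtree" are pinned down, the corollary follows by direct substitution.
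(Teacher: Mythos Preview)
Your proposal is correct and matches the paper's treatment: the paper states Corollary~3.3 without a separate proof, as an immediate consequence of Theorem~3.2. Your substitution $\lvert T_m\rvert = n+1$ (coming from the appended node $n+1$) into $w(T_m)=\sum_x \text{des}(T_x)-\lvert T_m\rvert+1$ is exactly the intended step, and your remark clarifying that the $T_x$ range over all subtrees produced by the full recursion is a helpful gloss on what the paper leaves implicit.
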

We propose the following algorithm to find the weight of a permutation directly from the permutation.

\begin{theorem}
    \normalfont Let
    \begin{enumerate}
        \item $\sigma \in S_n$ (Note that we append $n+1$ and $0$ in the back and $n+2$ in the front to avoid bound errors).
        \item $i$ be a non-descent 
        \item $j$ be the first index to the right of $i$ such that $\sigma_j < \sigma_i$
        \item $m$ be the index $i < m < j$ such that for every $k \neq m$, $i < k < j, \sigma_m > \sigma_k$
        \item $M$ be the largest index $M < m$ such that for every $M < k < m, \sigma_M > \sigma_k$
        \item $L$ be the index $L < m$ such that for every $L < k < m$, $\sigma_k > \sigma_i$ and $\sigma_L < \sigma_i$
    \end{enumerate}
We claim that $$[\text{max}(M,L) + 1, m]$$ is the subtree of $i$.
\end{theorem}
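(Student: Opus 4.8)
The plan is to prove the identity in two stages: first establish the structural fact that, in the maximum-weight tree produced by the construction, the subtree $T_x$ of every node is a contiguous block of positions of the sentinel-augmented word $(n+2)\,\sigma\,(n+1)\,0$; then, for a fixed non-descent (i.e.\ local minimum) $i$, locate the two endpoints of that block and check that they are $\max(M,L)+1$ and $m$. The interval property I would prove by induction on the size of the factor handled by a recursive call: each call operates on a contiguous factor, splits it at its minimum $p$, and---by Step~3 of the construction---cuts the part left of $p$ into $\tau_1,\dots,\tau_n$, each of which is again contiguous and is attached through its right-hand (maximal) endpoint; since the part right of $p$ is also contiguous, every subtree created is an interval, and the claim follows.

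For the right endpoint I would argue that $T_i$ is exactly the block in whose recursive call $i$ plays the role of the minimum $p$. That block is one of the left factors $\tau_k$ of its parent, so by construction its rightmost position carries the block's maximum and is joined upward to the parent minimum. Because $i$ is the smallest label of its block, every other label of $T_i$ exceeds $\sigma_i$, so $T_i$ cannot reach position $j$ (the first position right of $i$ with $\sigma_j<\sigma_i$) nor anything beyond it; thus all labels of $T_i$ to the right of $i$ lie in $(i,j)$, where they exceed $\sigma_i$. The maximum of that region sits at $m$, and the left-to-right factorization terminates the block containing $i$ precisely at this maximum: the labels of $(m,j)$ form the later factors $\tau_{k+1},\dots$ and are hung on the parent minimum as separate siblings rather than on $i$. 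Hence the right endpoint of $T_i$ is $m$.

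For the left endpoint I would use the two ``barriers'' recorded by $M$ and $L$: reading conditions (5)--(6), $M$ marks the closest position to the left of $m$ whose label exceeds the peak $\sigma_m$, and $L$ marks the closest position to the left of $i$ whose label is smaller than $\sigma_i$. No element of $T_i$ can lie at or beyond either barrier: a label exceeding $\sigma_m$ cannot be a descendant of the block whose maximum is $\sigma_m$, and a label below $\sigma_i$ violates the defining condition $x\ge i$ for membership in $T_i$; combined with the interval property, this forces the left endpoint to be one step past the nearer of the two barriers, namely $\max(M,L)+1$. The reverse inclusion---that every position of $[\max(M,L)+1,m]$ really belongs to $T_i$---then follows from the same interval property once the endpoints are shown to be tight.

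I expect the main obstacle to be the bookkeeping that makes the induction's local and global views agree: the quantities $m,M,L,j$ in the statement are read off the whole augmented word, whereas the inductive argument reasons inside a single recursively-produced block. I would therefore prove, as the technical heart, that each block is flanked on the left by a strictly larger label (the maximum of the previous factor, or the front sentinel $n+2$) and is closed off on the right by the parent minimum (ultimately the trailing sentinels $n+1$ and $0$), so that the block's own boundaries play exactly the role of the global sentinels; this is what guarantees that the globally-defined $m,M,L$ coincide with the ones seen inside the block. The proof would then split into the cases $M\ge L$ and $L>M$ according to which barrier binds, each settled by the inductive hypothesis applied to the appropriate sub-block.
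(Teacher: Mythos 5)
Your proposal takes essentially the same route as the paper: both pin the right endpoint of the block at $m$ (the block ends at its maximum and cannot cross $j$) and the left endpoint at one past the nearer of the two barriers $M$ and $L$, by tracking where the recursive construction places its cuts. Your explicit preliminary lemma that every subtree occupies a contiguous interval of positions is a worthwhile addition that the paper's proof uses only implicitly, but it does not change the substance of the argument.
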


\begin{proof}
    We start by proving the right side bound. Say that $r$ is the right side end of the subtree of an element $i$. We will prove that $r = m$.
    \begin{enumerate}
        \item If $r < m$, we let $k$ be where $\sigma_k < \sigma_i$ such that $k$ creates the split at $r$. By this, we mean that it splits the LHS into multiple trees by taking global maxes as described in the algorithm in Definition 7. Thus, we have $\sigma_k > \sigma_j$ since when taking $\sigma_j$ as in the algorithm, we did not create a split at $r$. Therefore, if $k$ creates the split at $r$, we must have $\sigma_r > \sigma_m$, which contradicts property 4 for $m$. Thus, $r \nless m$.
        \item If $r > m$, we have to check two cases. If $r > j$, $r$ could not reach $i$ because $j$ will create a split immediately to the right of index $m$. This is true because there are no elements between index $m$ and $j$ since $m$ is the global max by property 4. If $r < j$, because of property 4, we know $\sigma_m > \sigma_r$. Assume $k$ was the non-descent that created $r$ and $\sigma_k < \sigma_i$. Thus, when splitting the permutation for subtrees of $r$, we would have created a split immediately to the right of index $m$. Thus, since $r > m$, we cannot have $r$ in the subtree of $i$. Thus, $r \ngtr m$.
    \end{enumerate}
    Combined, we get $r = m$, so $m$, as described above, is the rightmost element of the subtree of $i$. Now, we prove the left side bound. Say that $l$ is the left side end of the subtree of $i$. We split it into two cases.
    \begin{enumerate}
        \item When $L < M$, we want to prove $l = m + 1$. There is guaranteed to be a cut at the right of $M$ and impossible to be a cut between $M$ and $m$, because all non-descents with values less than $\sigma_i$ are either to the left of $M$, to the right of $j$, or "blocked by $m$". As a result, there is a cut at $M$, and no cut between $M$ and $m$.
        \item When $L > M$, we want to prove $l = L + 1$. Again, every non-descent is either to the left of $L$, to the right of $j$, or "blocked by $m$".
    \end{enumerate}
    Thus, our theorem holds, and $[\max(M, L) + 1, m]$ is the subtree of $i$.
\end{proof}

As a result, with Corollary 3.3 we can apply this algorithm to each non-descent, find the sum of the number of descents in each range, and subtract n to get the weight of a permutation. 

We have implemented the algorithm to find the weight of permutation in Github in C++. \href{https://github.com/arwaeystoamneg/Tiered-Trees/blob/main/weight.cpp}{This code} runs in $O(n^2)$, where n is the size of the permutation; it is the same complexity the original algorithm runs in. However, one can use segments trees to pre-compute several of the variables, reducing the time to $O(n \cdot log(n))$. One can find the implementation \href{https://github.com/arwaeystoamneg/Tiered-Trees/blob/main/weight_nlogn.cpp}{here}.

\subsection{Minimum Decomposition of Maxmin Trees}

\hfill\\
Reconsider the algorithm to create the 0-weight maxmin tree of a permutation. 

\begin{definition}
    \normalfont We define \textit{the minimum decomp tree} of $\pi = \pi_1 \cdots \pi_L \cdot m \cdot \pi_R$, where $m$ is the minimum of $\pi$, as connecting $m$ to the minimum in each subtree $\pi_1, \cdots ,\pi_L, \pi_R$ instead of a descent. We will  view a minimum decomp tree as rooted at $m$ and denote the subtree of a node as all the nodes below it.
\end{definition}

\begin{example}
    \normalfont Take the permutation \\
    $\sigma = 1\ 12\ 15\ 9\ 10\ 5\ 7\ 11\ 6\ 4\ 13\ 3\ 8\ 2\ 14\ 16 \in S_{15}$ where 16 is appended to the end of the permutation as the extra node in the tree. \\
    After splitting once, we have:
    $1 \cdot 12\ 15\ 9\ 10\ 5\ 7\ 11\ 6\ 4\ 13\ 3\ 8\ 2\ 14\ 16$ so by our definition we have 1 is connected to 2.
    We then find the minimal element in the right permutation, 2, and again split the permutation with the maximum weight tree algorithm. Thus, we get: $12\ 15 \cdot 9\ 10\ 5\ 7\ 11\ 6\ 4\ 13 \cdot 3\ 8 \cdot 2 \cdot 14\ 16$ so by our definition, we get 2 is connected to 12, 4, 3, and 14.
    We continue this process, to get:
    \begin{enumerate}
        \item 12 is connected to 15
        \item 4 is connected to 5, 6, and 13
        \item 5 is connected to 9 and 7
        \item 9 is connected to 10
        \item 7 is connected to 11
        \item 3 is connected to 8
        \item 14 is connected to 16
    \end{enumerate}
    A visual representation of this tree is shown below:
    \begin{center}
    \begin{tikzpicture}
        \node {1} [sibling distance = 1.5cm]
            child {node {2}
                child {node {3}
                    child {node{8}}
                }
                child {node {14}
                    child {node {16}}
                }
                child {node {12}
                    child {node {15}}
                }
                child [missing]
                child {node {4}
                    child {node {13}}
                    child {node {5}
                        child {node {7}
                            child {node {11}}
                        }
                        child {node {9}
                            child {node {10}}
                        }
                    }
                    child {node {6}}
                }
            };
    \end{tikzpicture}
    \end{center}
\end{example}

\href{https://github.com/arwaeystoamneg/Tiered-Trees/blob/main/min_decomp.cpp}{Here} is a C++ implementation to find the minimum decomposition tree. \\
All this said, the minimum decomp tree has some interesting properties that we later use to prove a bijection between the coefficients of q-analouges of Eulerian polynomials and a certain type of partition denoted $T(n, k)$. Note: This will be defined later.

\subsection{Properties of the Minimum Decomp Tree}

We refer to the nodes of a minimum decomp tree that don't have any nodes below them as its stem. All nodes that are not leaves and the topmost node are part of its stem.
\begin{lemma}
    \normalfont Any valid minimum decomp tree corresponds to a unique permutation.
\end{lemma}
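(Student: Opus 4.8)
The plan is to prove the lemma by exhibiting an explicit inverse to the map $\pi \mapsto (\text{minimum decomp tree of }\pi)$: a deterministic reconstruction procedure that, given a valid minimum decomp tree $T$, returns a permutation, together with the observation that every choice made by the procedure is forced and that running it on the tree of any $\pi$ returns $\pi$. Throughout I would use the convention of Definition 7 and the worked example, namely that a permutation in $S_n$ is padded with its maximal symbol $n+1$ before being decomposed, so that $T$ has $n+1$ vertices; this padding is what makes the correspondence injective, since without it the permutations $2\,1$ and $1\,2$ produce the same two-vertex tree.

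The first step is a structural lemma, proved by induction on $|T|$ following the recursive definition of the tree: (i) $T$ is rooted at the global minimum and labels strictly increase along every root-to-leaf path, because each child of a vertex is the minimum of one of the proper sub-blocks left after that vertex is deleted; (ii) the vertex set of the subtree below any node $v$ is exactly the block of which $v$ is the minimum; and (iii) every block $B = B_L\cdot m_B\cdot B_R$ occurring in the decomposition ends with its own maximum entry. Part (iii) is the crucial point: the root is padded so that it ends with $n+1$; the greedy left-blocks $C_1,\dots,C_s$ end with their maxima by construction; and if a nontrivial $B$ ends with $\max B$ then $\max B$ cannot be $m_B=\min B$, so it lies in $B_R$, which forces $\max B_R=\max B>\max B_L$ and shows $B_R$ again ends with its maximum, so the induction continues.

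Next I would argue that the reconstruction at each non-leaf vertex $v$ is completely forced. By (ii), the vertex sets of the child-subtrees of $v$ are exactly the blocks $C_1,\dots,C_s,B_R$ appearing when $v$'s block is decomposed, and the maximum of each such block is the largest label in the corresponding subtree. Since $\max B_R=\max B>\max B_L=\max C_1>\max C_2>\cdots>\max C_s$ by (iii), the child of $v$ with the strictly largest maximum label must be $B_R$, and the remaining children, listed in strictly decreasing order of their maximum labels, must be $C_1,\dots,C_s$ in that order. Hence $v$'s block is the concatenation $C_1\cdots C_s\cdot v\cdot B_R$; recursing inside each child-subtree — which by (i) and (ii) is itself a valid minimum decomp tree on its block — reconstructs each sub-block internally, and concatenating from the root yields a permutation $\pi$. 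Because no step involved a free choice, this $\pi$ is the unique permutation whose minimum decomp tree is $T$.

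I expect the only real content to be establishing invariant (iii) together with its quantitative refinement $\max B_R>\max B_L$, since this is exactly what identifies $B_R$ among the children of $v$ and forces the ordering of the rest; the remaining parts are routine inductions on $|T|$. It is also worth flagging explicitly that the padding is essential — the lemma fails for the unpadded decomposition — and a small amount of care is needed to check that each child-subtree really is the minimum decomp tree of the sub-permutation it reconstructs, so that the induction closes cleanly.
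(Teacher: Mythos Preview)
Your argument is correct and takes a genuinely different route from the paper. The paper does not build an explicit inverse at all: it instead passes through the known bijection between permutations and weight-$0$ maxmin trees from \cite{2}, observing that a minimum decomp tree determines a weight-$0$ tree (by attaching the smallest available descent at each stem node), and then runs a short induction-by-contradiction to show two minimum decomp trees cannot yield the same weight-$0$ tree. Your approach is more direct and more informative: the observation that every block occurring in the recursion ends with its own maximum, and hence that among the children of a node the one whose subtree contains the globally largest label must be the right block $B_R$ while the remaining children are forced into the order $C_1,\dots,C_s$ by strictly decreasing subtree maxima, gives an explicit reconstruction algorithm and makes the uniqueness transparent without invoking any outside bijection. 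The paper's route is shorter on the page because it outsources the hard part to \cite{2}, but yours is self-contained and actually explains \emph{why} the tree remembers the permutation. Your remark that the padding by $n+1$ is essential is also a point the paper leaves implicit.
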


\begin{proof}
    Every minimum decomp tree can biject to its 0-weight counterpart by choosing the minimum possible descent at each node in the stem and using the algorithm in \cite{2}. Furthermore, it can be shown with induction that every minimum decomp tree corresponds to a unique 0-weight tree. When the sizes of the tree are $0$ or $1$, the result is trivial. Assuming that the result is true for all trees of size $1, \ldots, n$, we can prove the result for $n + 1$. We proceed with contradiction. Assume there exists two minimum decomp trees that correspond to the same 0-weight permutation. The top node of these trees must be the same and because of the inductive assumption, all the smaller trees connecting to the top node must also be the same. Furthermore, because the top node must connect to the minimums in all the smaller trees, we can conclude that the two trees are isomorphic to each other. 
\end{proof}

\begin{lemma}
    \normalfont For every index i, its subtree in a max weight tree are all the nodes below it in the minimum decomp tree.

\end{lemma}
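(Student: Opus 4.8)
The plan is to prove this by induction on the number of vertices. For a permutation $\rho$, let $\mathrm{MW}(\rho)$ denote the max weight tree produced by the construction (with the vertex $n+1$ already appended, once, at the outset) and $\mathrm{MD}(\rho)$ the minimum decomp tree produced by the analogous construction. The point that makes an induction possible is that the two constructions split $\rho = \rho_1 \cdots \rho_L \cdot m \cdot \rho_R$, where $m = \min(\rho)$, into the \emph{same} contiguous pieces $\rho_1, \dots, \rho_L, \rho_R$, recurse identically on each of them, and differ only in which single vertex of each piece is joined to $m$: the rightmost vertex (which is the global maximum) of the piece in $\mathrm{MW}(\rho)$, and the minimal vertex of the piece in $\mathrm{MD}(\rho)$. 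So the statement I would prove by induction is: for every permutation $\rho$ and every index $i$, the subtree $T_i$ of $i$ in $\mathrm{MW}(\rho)$ (Definition 4) equals the set of vertices lying below $i$ in $\mathrm{MD}(\rho)$; the lemma is the instance $\rho = \sigma \cdot (n+1)$.

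The base case $|\rho| = 1$ is trivial ($T_i = \{i\}$ on both sides). For the inductive step, if $i = m$ then $T_m$ is all of $\mathrm{MW}(\rho)$ (every vertex has label $\ge m$ and $\mathrm{MW}(\rho)$ is a tree, hence connected), while in $\mathrm{MD}(\rho)$ every vertex lies below the root $m$, so the two sides agree. If $i \ne m$, then $i > m$ and $i$ belongs to exactly one piece $\rho_j$. Two structural facts do the work. First, the restriction of $\mathrm{MW}(\rho)$ to the vertex set of $\rho_j$ is exactly $\mathrm{MW}(\rho_j)$, and likewise for $\mathrm{MD}$, since the recursion performed on a piece only ever reads the entries of that piece. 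Second, the unique edge of $\mathrm{MW}(\rho)$ having exactly one endpoint inside $\rho_j$ is the edge joining $m$ to the hook vertex of $\rho_j$: the other top-level edges join $m$ to a different piece, and every deeper edge has both endpoints inside one piece. Since $m < i$, no path through vertices of label $\ge i$ can leave $\rho_j$, and therefore $T_i$ computed in $\mathrm{MW}(\rho)$ agrees with $T_i$ computed in $\mathrm{MW}(\rho_j)$. On the other side, $\min(\rho_j)$ is the root of the subtree of $\mathrm{MD}(\rho)$ spanned by the vertices of $\rho_j$ and $i$ lies in that subtree, so the set of vertices below $i$ in $\mathrm{MD}(\rho)$ agrees with the set of vertices below $i$ in $\mathrm{MD}(\rho_j)$. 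Since $|\rho_j| < |\rho|$, the inductive hypothesis applied to $\rho_j$ finishes the step.

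The part I expect to require the most care is the first structural fact: that running a construction ``inside'' a piece gives the same tree as running it on that piece in isolation. This is exactly where the appended vertex needs attention, since $n+1$ is added only at the outermost level; it forces the inductive statement to be about the tree that the recursive procedure outputs rather than literally about ``the maximum weight tree of the permutation $\rho_j$'' (the latter would append a fresh $|\rho_j|+1$ and change the tree). One must also record the invariant that keeps the construction well defined throughout --- every piece arising in the recursion has its maximum in the last position, which holds at the top because $n+1$ was appended and is preserved under splitting into left pieces and a right piece --- and check, e.g.\ by writing out the edge set of $\mathrm{MW}(\rho)$ one recursion level at a time, that each piece is connected and is attached to the rest by a single edge. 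A heavier, more conceptual alternative that avoids induction is to view the nested pieces as a laminar family $\mathcal{B}$ of vertex sets in which each vertex $v$ is the minimum of a unique block $B(v)$, and to show directly that $T_i$ in the max weight tree and the set of vertices below $i$ in the minimum decomp tree both equal $B(i)$: the former because $B(i)$ induces a connected subtree on vertices of label $\ge i$ whose only outward edge reaches the vertex $\min(B(i)^{\mathrm{par}}) < i$ of the parent block, the latter by unwinding the minimum decomp recursion.
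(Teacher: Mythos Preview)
Your argument is correct, and it is in fact the paper's argument carried out in full: the paper's proof of this lemma is the single sentence ``This is true by how the tree was constructed,'' and your induction on $|\rho|$ is precisely the unpacking of that sentence. The key structural observations you isolate --- that the two constructions use the \emph{same} partition of $\rho$ into pieces $\rho_1,\dots,\rho_L,\rho_R$ and recurse identically on each piece, and that the only edge of either tree leaving a piece goes to $m=\min(\rho)$ --- are exactly what ``by construction'' is gesturing at. Your care about the appended vertex $n{+}1$ (formulating the induction for the output of the recursive procedure on a piece, rather than for ``the max weight tree of $\rho_j$'' which would append a fresh top element) is a genuine subtlety that the paper does not spell out, and your laminar-family reformulation is a clean way to see the same fact without induction.
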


\begin{proof}
    This is true by how the tree was constructed.
\end{proof}

\begin{lemma}
    \normalfont Every leaf of a minimum decomp tree is a descent and every descent is a leaf.
\end{lemma}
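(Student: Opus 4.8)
The plan is to argue directly from the recursive construction of the minimum decomp tree. Recall that in this construction each node arises as the minimum of a unique \emph{piece}: a contiguous block $[a,b]$ of positions of the permutation obtained from $\sigma$ by appending $n+1$. These blocks are produced by repeatedly deleting the current minimum and cutting the portion to its left, the cuts being made exactly at the successive positions carrying the maximum of the still-uncut part (this is the rule in Definition 3.5). When the node $\sigma_p$ is processed it is the minimum of its piece $[a,b]$, and its children are precisely the minima of the sub-pieces into which $[a,b]$ splits; consequently $\sigma_p$ is a leaf if and only if $[a,b]=[p,p]$. (By Lemma 3.8 this block is exactly the subtree $T_p$ of the max weight tree, so this reduction can also be read off from there.) Hence it suffices to show that the piece of $\sigma_p$ equals $[p,p]$ if and only if $\sigma_p>\sigma_{p+1}$, under the convention that the appended value $n+1$ at position $n+1$ is a descent (equivalently $\sigma_{n+2}:=0$).

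The key step is an invariant on pieces, which I would prove by induction on the depth of the recursion: every piece $[a,b]$ arising in the construction satisfies both $\sigma_b=\max\{\sigma_a,\dots,\sigma_b\}$ and $\sigma_b>\sigma_{b+1}$. The base case $[1,n+1]$ holds since $\sigma_{n+1}=n+1$ is the global maximum and, by convention, a descent. For the inductive step, let $[a,b]$ satisfy the invariant and have its minimum at position $r$. Its right sub-piece is $[r+1,b]$, which keeps the right endpoint $b$, so the invariant is inherited there. Its left sub-pieces are $[a,q_1],[q_1{+}1,q_2],\dots,[q_{s-1}{+}1,r{-}1]$, where, by the cutting rule, $\sigma_{q_t}=\max\{\sigma_{q_{t-1}+1},\dots,\sigma_{r-1}\}$; thus $\sigma_{q_t}$ is the maximum of its own sub-piece, and it is immediately followed either by an element of $[q_t{+}1,r{-}1]$, which it exceeds, or (when $t=s$) by $\sigma_r=\min[a,b]$, which it also exceeds. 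This proves the invariant.

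Granting the invariant, the two implications are immediate. Let $[a,b]$ be the piece of $\sigma_p$, so that $\sigma_p=\min[a,b]$ while, by the invariant, $\sigma_b=\max[a,b]$ and $\sigma_b>\sigma_{b+1}$. If $\sigma_p>\sigma_{p+1}$ then $p<b$ is impossible, since it would place $\sigma_{p+1}$ inside $[a,b]$ with $\sigma_{p+1}>\min[a,b]=\sigma_p$; so $p=b$, whence $\sigma_p=\sigma_b$ is at once the minimum and the maximum of $[a,b]$, which forces $a=b=p$ and makes $\sigma_p$ a leaf. Conversely, if $\sigma_p<\sigma_{p+1}$ then $p=b$ is impossible, for the invariant would then give $\sigma_p=\sigma_b>\sigma_{b+1}=\sigma_{p+1}$; so $p<b$, the right sub-piece $[p+1,b]$ is nonempty, $\sigma_p$ has a child, and $\sigma_p$ is not a leaf. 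Together these say precisely that $\sigma_p$ is a leaf if and only if $\sigma_p$ is a descent.

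I expect the main obstacle to be formulating and proving the piece invariant cleanly — in particular the ``maximum at the right endpoint'' half — and handling the end-of-permutation conventions (the appended $n+1$ and the virtual $0$ after it) without an awkward special case. Once the invariant is established, both directions of the equivalence are one-liners, and the only remaining point, that a node is a leaf exactly when its piece is a single position, is transparent from the recursive definition.
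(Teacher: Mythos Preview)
Your proof is correct. The paper's own proof is a two-line appeal to Lemma~3.8: a node is a leaf in the minimum decomp tree iff its subtree there has size~$1$, which by Lemma~3.8 means its subtree in the max weight tree has size~$1$, which (implicitly from the construction) happens exactly for descents. You instead work directly with the recursive decomposition into contiguous blocks and prove an explicit invariant --- that every piece $[a,b]$ has its maximum at position $b$ and satisfies $\sigma_b>\sigma_{b+1}$ --- from which the equivalence ``leaf $\Leftrightarrow$ singleton piece $\Leftrightarrow$ descent'' follows immediately. Your route is longer but more self-contained: it does not lean on Lemma~3.8 or on any unproven statement about which nodes have size-$1$ subtrees in the max weight tree, and the invariant you isolate is a clean structural fact about the decomposition that could be reused elsewhere. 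The paper's route is shorter but only because it defers the real content to the earlier lemma and to the reader's understanding of the max weight construction.
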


\begin{proof}
    Every descent is a leaf because having a larger than 1 sized subtree in the minimum decomp tree means it has a subtree in the original permutation. Every leaf have a subtree of size 1, and therefore, is a descent.
\end{proof}

\begin{corollary}
    \normalfont The weight of a permutation can be found by creating its minimum decomp tree, counting the number of leaves of each non-descent's subtree and subtracting n.
\end{corollary}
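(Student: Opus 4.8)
The plan is to obtain this corollary by rewriting the descent count appearing in Corollary 3.3 as a leaf count in the minimum decomp tree, using Lemmas 3.7 and 3.8. Concretely, Corollary 3.3 (together with the discussion following Theorem 3.4) states that
$$w(\sigma) = \sum_{i} \text{des}(T_i) - n,$$
where $i$ ranges over the non-descents of $\sigma$ and $T_i$ is the subtree of $i$ in the max weight tree; this is exactly the family of subtrees named by the phrase ``each non-descent's subtree.'' So it suffices to show that, for each such $i$,
$$\text{des}(T_i) = \#\{\text{leaves of the minimum decomp tree lying in the subtree rooted at } i\}.$$

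First I would invoke Lemma 3.7: the node set of $T_i$ equals the set of nodes at or below $i$ in the minimum decomp tree, i.e. the subtree of the minimum decomp tree rooted at $i$. Since that node set is downward closed (it contains all descendants of each of its members), a node $v \in T_i$ has no children in the whole minimum decomp tree if and only if it has no children within the subtree rooted at $i$; hence the leaves of the minimum decomp tree that lie in $T_i$ are precisely the leaves of the subtree rooted at $i$.

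Next I would apply Lemma 3.8, which identifies the descents of the tree with the leaves of the minimum decomp tree. Restricting that identification to the nodes of $T_i$ gives $\text{des}(T_i) = \#\{\text{leaves of the minimum decomp tree in } T_i\}$, which by the previous paragraph equals the number of leaves of the subtree of the minimum decomp tree rooted at $i$. Substituting this into the displayed formula from Corollary 3.3 yields exactly the claim: build the minimum decomp tree, for each non-descent $i$ count the leaves of the subtree rooted at $i$, sum these, and subtract $n$.

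I expect the only delicate point to be bookkeeping rather than any genuine difficulty: one must check that the family of subtrees summed in Corollary 3.3 is the one indexed by non-descents (so that ``each non-descent's subtree'' here refers to the same collection, with the leaf and root contributions accounted for as in Corollary 3.3), verify the ``leaf of the whole tree inside $T_i$'' versus ``leaf of the subtree rooted at $i$'' identification from downward closedness, and confirm that Lemma 3.8 is applied to the $(n+1)$-node tree obtained after appending $n+1$. Once these are pinned down, the corollary is a one-line substitution into Corollary 3.3.
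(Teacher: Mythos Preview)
Your proposal is correct and follows the same approach as the paper, which simply says the corollary ``follows directly from Corollary 3.3.'' You have made explicit the bridge the paper leaves implicit: Lemmas 3.7 and 3.8 are exactly what convert the descent count in $T_i$ into a leaf count in the minimum decomp subtree rooted at $i$, and your downward-closedness remark cleanly handles the leaf-in-subtree identification.
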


\begin{proof}
    This follows directly from Corollary 3.3.
\end{proof}

\begin{lemma}
    \normalfont When one "moves up" a leaf, i.e. the descent is attached to a node one higher on the stem, the weight of the tree decreases by 1.
\end{lemma}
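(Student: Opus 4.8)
The plan is to re-express the weight of a minimum decomp tree as a sum of depths of its leaves, after which the statement is essentially immediate.

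I would start from the leaf-counting formula for the weight (Corollary 3.10, itself a consequence of Corollary 3.3 together with Lemmas 3.8 and 3.9): for a minimum decomp tree $T$ on $n+1$ vertices,
\[
w(T)=\Bigl(\sum_{v}\ell(v)\Bigr)-n,
\]
where $v$ runs over the non-leaf (equivalently, non-descent) nodes of $T$ and $\ell(v)$ is the number of leaves in the subtree rooted at $v$. I would then rewrite this by double counting pairs $(v,u)$ with $v$ a non-leaf and $u$ a leaf lying in the subtree of $v$: since every proper ancestor of a leaf has a child and is therefore itself a non-leaf, the non-leaf nodes whose subtree contains a fixed leaf $u$ are exactly the proper ancestors of $u$, and there are $\operatorname{depth}(u)$ of them (the topmost node having depth $0$). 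Hence
\[
w(T)=\Bigl(\sum_{u \text{ a leaf}}\operatorname{depth}(u)\Bigr)-n.
\]

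Now I fix notation for the operation: let $b$ be the leaf being moved, $p$ the stem node it currently hangs from, and $p'$ the parent of $p$ on the stem (so $p$ is not the topmost node, and $p'$ is one step higher). Re-attaching $b$ from $p$ to $p'$ lowers $\operatorname{depth}(b)$ by exactly $1$ and leaves every other node's depth unchanged. In the situation the lemma has in mind, namely when $p$ still has a child after $b$ is removed, nothing changes its leaf/non-leaf status ($b$ is still a leaf, $p$ is still a non-leaf, and $p'$ is still a non-leaf), so the multiset of leaf-depths merely replaces one copy of $\operatorname{depth}(b)$ by $\operatorname{depth}(b)-1$. Thus $\sum_{u}\operatorname{depth}(u)$ decreases by $1$, and since $n$ is unchanged, $w(T)$ decreases by exactly $1$, as claimed.

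Two caveats are worth recording. First, the hypothesis that $p$ keeps another child is genuinely needed: if $b$ is the \emph{only} child of $p$, then after the move $p$ becomes a fresh leaf at depth $\operatorname{depth}(p)$, and the same count gives a change of $\operatorname{depth}(p)-1$ (zero when $p$ is a child of the root, positive otherwise), so "moving up a leaf" must be read as keeping $p$ on the stem. Second — and this I expect to be the only part needing real care — one should check that the tree produced by the move is again a valid minimum decomp tree, so that its weight is even defined. I would argue this from Definition 3.5 and Lemma 3.7: detaching the one-element piece carrying $b$ from the decomposition at $p$ and splicing it in as a new top-level piece in the decomposition at $p'$ preserves the "cut at every global maximum" structure of all the remaining pieces, so the new labelled rooted tree is the minimum decomp tree of an explicit permutation. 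If one prefers to regard $w$ simply as the combinatorial functional $\sum_{u}\operatorname{depth}(u)-n$ on such rooted trees, this last step is unnecessary and the depth bookkeeping is already a full proof.
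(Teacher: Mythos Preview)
Your argument is correct and rests on the same ingredient as the paper's proof, namely the leaf-counting formula of Corollary~3.10; the paper simply observes directly that exactly one non-descent (the old attachment point $p$) loses the moved leaf from its subtree while every other non-descent's leaf count is unchanged, whereas you first dualize that sum via double counting into $\sum_{u\text{ leaf}}\operatorname{depth}(u)-n$ and then track the single depth that drops. These are two sides of the same incidence count, so the approaches are essentially identical; your added caveats about $p$ retaining another child and about the resulting tree still being a valid minimum decomp tree are more careful than the paper, which tacitly assumes both.
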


\begin{proof}
    One non-descent loses a leaf in its subtree while all others remain unchanged. Thus, we just subtract one from the total weight.
\end{proof}

\section{The Stabilization of \texorpdfstring{$E_n(x, q)$}{Lg}}

When studying permutations, we often consider different permutation statistics, one of them being the number of descents in it. Thus, we can define a way to count these.

\begin{definition}
    \normalfont The \textit{Eulerian polynomial} is defined as:
    $$E_n(x) = \sum_{\sigma \in S_n} x^{\text{des}(\sigma)}$$
    where $\text{des}(\sigma)$ represents the number of descents in $\sigma$. The coefficients of these polynomials are called the \textit{Eulerian numbers}.
\end{definition}

\begin{example}
    \normalfont For example, we have $E_4(x) = 1 + 11x + 11x^2 + x^3$.
\end{example}

To include information about weights of permutations, we define an extension of the Eulerian polynomial with the addition of a variable $q$.

\begin{definition}
    \normalfont Define the \textit{q-Eulerian polynomial} as:
    $$E_n(x, q) = \sum_{\sigma \in S_n}x^{\text{des}(\sigma)} q^{w(\sigma)}$$
    where $\text{des}(\sigma)$ represents the number of descents in $\sigma$ and $w(\sigma)$ represents the weight of $\sigma$.
\end{definition}

\begin{example}
    \normalfont We give some of the first few q-Eulerian polynomials below: \\
    $E_3(x, q) = 1 + x(q + 3) + x^2$ \\
    $E_4(x, q) = 1 + x(q^2 + 3q + 7) + x^2(q^2 + 4q + 6) + x^3$ \\
    $E_5(x, q) = 1 + x(q^3 + 3q^2 + 7q + 15) + x^2(q^4 + 4q^3 + 11q^2 + 25q + 25)+ x^3(q^3 + 5q^2 + 10q + 10) + x^4$ \\
    $E_6(x, q) = 1 + x(q^4 + 3q^3 + 7q^2 + 15q + 31) + x^2(q^6 + 4q^5 + 11q^4 + 31q^3 + 58q^2 + 107q + 90)+ x^3(q^6 + 5q^5 + 16q^4 + 34q^3 + 76q^2 + 105q + 65)+ x^4(q^4 + 6q^3 + 15q^2 + 20q + 15) + x^5$
\end{example}

Fixing an integer $k$, we consider the coefficients $x^k$ in $E_n(x, q)$. As $n \rightarrow \infty$, the coefficients of $E_n(x, q)$ stabilize \footnote{stabilize meaning converge to a fixed sequence} as conjectured in \cite{2}.

\begin{example}
    The coefficients of $x^3$ from Example 5.2 seem to stabilize to
    $$q^M + 4q^{M-1} + 11q^{M-2} + 31q^{M-3} + ... $$
\end{example}

\begin{theorem}
    \normalfont For $k, d, m \in \mathbb{N}$ such that $m = d+k+1$ and $n \geq m$, the value of $E_n[x^dq^{\text{maxwt}(n,d)-k}]$ is stabilized.
\end{theorem}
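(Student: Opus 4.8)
The plan is to pass from permutations to their minimum decomposition trees and reinterpret the statement as a counting problem. By the permutation–tree bijection (Lemma 3.7) together with the leaf-count weight formula (Corollary 3.10) and Lemma 3.9, a permutation $\sigma\in S_n$ with $\mathrm{des}(\sigma)=d$ corresponds bijectively to a valid minimum decomp tree $T$ on $n+1$ vertices with exactly $d+1$ leaves (the $d$ descents of $\sigma$ together with the appended vertex $n+1$), and $w(\sigma)=\bigl(\sum_x\ell(x)\bigr)-n$, the sum over stem vertices $x$ of the number $\ell(x)$ of leaves below $x$. Since every proper ancestor of a leaf is a stem vertex, $\sum_x\ell(x)=\sum_L\mathrm{depth}(L)$ over the leaves $L$, which is maximized by the caterpillar whose $n-d$ stem vertices form a path carrying all $d+1$ leaves at its bottom; that tree is realizable (it is the tree of $1\,2\cdots(n-d-1)\,n\,(n-1)\cdots(n-d)$), so $\mathrm{maxwt}(n,d)=(d+1)(n-d)-n$, and $w(\sigma)=\mathrm{maxwt}(n,d)-k$ is equivalent to $\mathrm{def}(T):=\sum_L\bigl((n-d)-\mathrm{depth}(L)\bigr)=k$. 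Thus it suffices to prove that $a_n$, the number of valid minimum decomp trees on $n+1$ vertices with $d+1$ leaves and deficiency $k$, is constant for $n\ge m=d+k+1$.

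First I would peel off a forced prefix. If the root $v_0=1$ has a unique child it must be $v_1=2$ (the minimum of everything but $1$), and iterating, $T$ begins with a chain $v_0=1,\dots,v_{j-1}=j$ of stem vertices each with a single child, up to the first vertex $v_j=j+1$ having $\ge 2$ children (or $T$ is a bare path and $d=0$, a trivial case). The subtree rooted at $v_j$ — call it the \emph{tail} — is then \emph{non-extendable}: its root has $\ge 2$ children. One checks that chopping off, and re-attaching, this forced increasing prefix preserves validity, the descent count, and the deficiency, giving $a_n=\sum_{j\ge 0}b_{n-j}$, where $b_N$ is the number of non-extendable valid minimum decomp trees on $N+1$ vertices with $d+1$ leaves and deficiency $k$; crucially $b_N$ depends only on $N,d,k$, not on the ambient $n$.

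The crux is a size bound on tails: \textbf{a non-extendable valid minimum decomp tree with $d+1$ leaves and deficiency $k$ has at most $d+k+2$ vertices.} Let $b$ be its root and let $g$ be the number of stem vertices strictly below $b$, so the tree has $g+1$ stem vertices and $g+1+(d+1)$ vertices in all; I claim $\mathrm{def}\ge g$. If $b$ has a leaf child $L_0$, then $L_0$ has depth at most $\mathrm{depth}(b)+1$ while $n-d=\mathrm{depth}(b)+1+g$, so $L_0$ alone contributes $\ge g$. Otherwise $b$ has stem children with subtrees $S_1,\dots,S_p$, $p\ge 2$; if $S_i$ has $a_i$ stem vertices (so $\sum a_i=g$) then any leaf in $S_i$ has depth at most $\mathrm{depth}(b)+1+a_i$, hence contributes $\ge g-a_i$, and since each $S_i$ contains a leaf, $\mathrm{def}\ge\sum_i(g-a_i)=(p-1)g\ge g$. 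So $g\le k$, and the tail has at most $(k+1)+(d+1)=d+k+2$ vertices. Consequently $b_N=0$ for $N+1>d+k+2$, i.e.\ for $N>m$, so $a_n=\sum_{N\le n}b_N=\sum_{N\le m}b_N$ is constant for all $n\ge m$. Moreover the tail consisting of $b$, one leaf child of $b$, and a path of $k$ further stem vertices carrying the other $d$ leaves has exactly $m+1$ vertices, $d+1$ leaves, and deficiency $k$, so $b_m\ge 1$ and the stabilization begins precisely at $n=m$.

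The main obstacle, I expect, is twofold. The inequality $\mathrm{def}\ge g$ needs to be made uniform over all configurations — in particular when $b$ has both leaf and stem children and when the $S_i$ branch further — via an induction encoding the principle that branching in the stem wastes depth; the estimate above only treats the two extremal shapes. Second, and more delicate in practice, one must pin down exactly which labeled increasing trees are valid minimum decomp trees (the decreasing order of the children's subtree-maxima, and the role of the block to the right of the minimum), so that the forced-prefix factorization is a genuine bijection and $b_N$ is the clean count used above; this precision is what forces the threshold to come out to exactly $m=d+k+1$ rather than something larger.
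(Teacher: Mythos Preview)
The paper does not prove this theorem; immediately after the statement it writes ``This was proved in \cite{3}'' and moves on. So there is no in-paper argument to compare against, and your proposal constitutes an independent proof built on the minimum-decomposition machinery the paper develops in Section~3.

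Your argument is essentially correct, and in fact more complete than your closing paragraph concedes. The two cases you give for $\mathrm{def}\ge g$ are already exhaustive: if the tail root $b$ has \emph{any} leaf child, Case~1 applies and that single leaf already contributes $(n-d)-(\mathrm{depth}(b)+1)=g$; otherwise every child of $b$ is a stem vertex, Case~2 applies, and the inequality $\mathrm{depth}(L)\le \mathrm{depth}(b)+1+a_i$ for $L\in S_i$ holds regardless of further branching inside $S_i$, since branching can only decrease leaf depths. No induction is needed, and the mixed case ``$b$ has both leaf and stem children'' is subsumed by Case~1.

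The second concern, that chopping and re-attaching the forced prefix respects validity, is genuine but routine once one unwinds the construction. The root $1$ has a unique child precisely when $\sigma_1=1$: otherwise the nonempty left block $\pi_L$ contributes at least one additional child beyond $\min(\pi_R)$. When $\sigma_1=1$, the subtree at $2$ is, after subtracting $1$ from every label, exactly the minimum decomp tree of $(\sigma_2-1)\cdots(\sigma_n-1)\in S_{n-1}$ with $n$ appended; prepending a $1$ inverts this and preserves both the descent count and the deficiency (your computation $N-d-\mathrm{depth}_{\text{tail}}(L)=(n-d)-\mathrm{depth}_T(L)$ goes through verbatim). Iterating gives the clean factorization $a_n=\sum_{N\le n} b_N$ with $b_N$ intrinsic, and your size bound $N\le m=d+k+1$ then yields constancy for $n\ge m$. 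The witness tree you describe for $b_m\ge1$ is realizable (e.g.\ for $d=2,k=1$ take $\sigma=3\,1\,4\,2$), confirming that the threshold is sharp, though strictly speaking the theorem as stated only requires $b_N=0$ for $N>m$.
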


This was proved in \cite{3} and we can thus extract the formal power series from the stabilization.

\begin{definition}
    \normalfont The \textit{power series $W_d(t) \in \mathbb{Z}[\![t]\!]$} for every $d \geq 1$ is defined as
    $$W_d(t) = 1 + a_1t + a_2t^2 + \cdots$$
    where the coefficients for $x^d$ as $n \rightarrow \infty$ stabilize to
    $$q^M + a_1q^{M-1} + a_2q^{M-2} + \cdots$$
    with $M = d(n-d-1)$ as the maximum weight for a permutation of length $n$ and $d$ descents (proved in \cite{2}).
\end{definition}

\begin{example}
    \normalfont The first few power series $W_d(t)$ are shown below:
    \begin{align*}
        W_1(t) &= 1 + 3t + 7t^2 + 15t^3 + 31t^4 + 63t^5 + \cdots \\
        W_2(t) &= 1 + 4t + 11t^2 + 31t^3 + 65t^4 + 157t^5 + \cdots \\
        W_3(t) &= 1 + 5t + 16t^2 + 41t^3 + 112t^4 + 244t^5 + \cdots \\
        W_4(t) &= 1 + 6t + 22t^2 + 63t^3 + 155t^4 + 393t^5 + \cdots \\
    \end{align*}
\end{example}

The coefficients of $W_d(t)$ was conjectured in \cite{2} to have a partial correspondece to the sequence A256193 in OEIS. This correspondence is proved in the next section.

\section{A Bijection between \texorpdfstring{$T(n, k)$}{Lg} and the Coefficients of \texorpdfstring{$W_d(t)$}{Lg}}

\subsection{\texorpdfstring{$T(n,k)$}{Lg}}
\begin{definition}
    \normalfont Define \textit{$T(n,k)$} as the number of partitions of $n$ into 2 types with exactly $k$ parts of the second type. 
\end{definition}

\begin{example}
    \normalfont Take $T(8, 5)$ for example. We first partition 8 into at least 5 parts and compute the number of different ways to assign 5 parts of the second sort.
    \begin{enumerate}
        \item 41111 : ${\binom{5}{5}} = 1$
        \item 32111 : ${\binom{5}{5}} = 1$
        \item 311111 : ${\binom{6}{5}} = 6$
        \item 22211 : ${\binom{5}{5}} = 1$
        \item 221111 : ${\binom{6}{5}} = 6$
        \item 2111111 : ${\binom{7}{5}} = 21$
        \item 11111111 : ${\binom{8}{5}} = 56$
    \end{enumerate}

  Adding, we get that $T(8, 5) = 92$. 
\end{example}

The table below shows the first few $T(n, k)$, $n \geq 0, 0 \leq k \leq n$:
\begin{table}[ht]
\caption{Coefficients of $T(n,k)$}
\centering
\begin{tabular}{c c c c c c c c c c c}
\textbf{1} & & & & & & & & & &\\
1 & \textbf{1} & & & & & & & & &\\
2 & \textbf{3} & \textbf{1} & & & & & & & &\\
3 & 6 & \textbf{4} & \textbf{1} & & & & & & &\\
5 & 12 & \textbf{11} & \textbf{5} & \textbf{1} & & & & &\\
7 & 20 & 24 & \textbf{16} & \textbf{6} & \textbf{1} & & & & &\\
11 & 35 & 49 & \textbf{41} & \textbf{22} & \textbf{7} & \textbf{1} & & & & \\
15 & 54 & 89 & 91 & \textbf{63} & \textbf{29} & \textbf{8} & \textbf{1} & & &\\
22 & 86 & 158 & 186 & \textbf{155} & \textbf{92} & \textbf{37} & \textbf{9} & \textbf{1} & &\\
30 & 128 & 262 & 351 & 342 & \textbf{247} & \textbf{129} & \textbf{46} & \textbf{10} & \textbf{1} &\\
42 & 192 & 428 & 635 & 700 & \textbf{590} & \textbf{376} & \textbf{175} & \textbf{56} & \textbf{11} & \textbf{1}
\end{tabular}
\caption*{The numbers in bold correspond to coefficients in $W_d(t)$.}
\label{Tab:Tcr}
\end{table}

\subsection{Bijection to \texorpdfstring{$W_d(t)$}{Lg}}

We will denote node $n+1$ as a descent.
\begin{theorem}
\normalfont Consider all permutations of length $n$ with $d$ descents and $(n-d-1)(d-1)$ weight. These correspond to trees of $n+1$ nodes, $d+1$ descents\footnote{We consider the added node $n+1$ a descent}, and $(n-d-1)(d-1)$ weight. The number of these permutations is equal to the ($n-d$)th coefficient in $W_d(t)$.
We claim there is a bijection between this coefficient and $T(n-1, d)$ when $n \geq 2d$.
\end{theorem}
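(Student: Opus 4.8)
The plan is to prove the equality not by manipulating the series $W_d(t)$ directly, but by building an explicit bijection between the objects it counts and the two-kind partitions enumerated by $T(n-1,d)$. By Lemma 3.7 the permutations of length $n$ with $d$ descents and weight $(n-d-1)(d-1)$ are in bijection with minimum decomposition trees on $n+1$ nodes having $d+1$ descents and the same weight, so it suffices to count these trees. The advantage of this reformulation is Corollary 3.10 together with Lemma 3.9: the weight of such a tree equals $\bigl(\sum_{x}(\text{leaves below } x)\bigr)-n$, summed over the non-descent (stem) nodes $x$, and since the leaves are exactly the $d+1$ descents this is $\sum_{\ell}\operatorname{depth}(\ell)-n$, where $\operatorname{depth}(\ell)$ counts the stem ancestors of the leaf $\ell$. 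First I would fix coordinates using this formula: the stem has $n-d$ nodes, so each leaf has depth at most $n-d$, and hanging all $d+1$ leaves at the bottom of one stem path gives the maximum weight $M=(d+1)(n-d)-n=d(n-d-1)$ of Definition 4.7.

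Since the target weight is $(n-d-1)(d-1)=M-(n-d-1)$, Lemma 3.11 (moving a leaf one step up the stem lowers the weight by $1$) identifies every tree of this weight with the maximal ``broom'' modified by lifting its leaves so that the total upward displacement is exactly $n-d-1$. Thus the coefficient counts the admissible ways to spread a total lift of $n-d-1$ over the $d+1$ leaves along a stem of $n-d$ nodes, together with the stem branchings these lifts induce. The second step is to read off a partition: I would record the multiset of leaf-lifts as the parts of a partition, designate the $d$ parts arising from the stem's branch data as parts of the second kind, and distribute the remaining mass as first-kind parts so that the total is $n-1=(n-d-1)+d$. The output is a partition of $n-1$ with exactly $d$ parts of the second kind, i.e.\ one of the objects counted by $T(n-1,d)=\sum_{\lambda\vdash n-1,\ \ell(\lambda)\ge d}\binom{\ell(\lambda)}{d}$, the quantity evaluated in Example 5.2. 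Injectivity and surjectivity would then be checked by reconstructing the tree from a partition, the parts fixing the leaf depths and the marked parts fixing the branch points.

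The main obstacle is this last matching: proving that the lift/branch encoding hits every two-kind partition exactly once and reproduces the term $\binom{\ell(\lambda)}{d}$, and this is precisely where the hypothesis $n\ge 2d$ is needed. Rewritten as $n-d\ge d$, the hypothesis says the stem is at least as long as the number of second-kind parts, which is what lets the $d$ marked parts be assigned to $d$ distinct stem levels without collision; with a shorter stem the choice of which parts are second-kind can no longer be made freely, and the count would fall below $\binom{\ell(\lambda)}{d}$. I expect the crux to be an auxiliary counting lemma that compares, term by term over partitions $\lambda\vdash n-1$, the number of admissible lift configurations with a given part-structure against $\binom{\ell(\lambda)}{d}$, and that pins down $n\ge 2d$ as exactly the range in which the two agree; establishing that this comparison is a genuine equality realized by the reconstruction map, rather than a numerical coincidence, is the delicate heart of the argument.
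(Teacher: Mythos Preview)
Your setup---pass to minimum decomp trees, use the leaf-depth weight formula, start from the maximal ``broom'' and lift leaves a total of $n-d-1$ steps---matches the paper's. But two essential ingredients are missing.

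First, you never prove that the stem is a single path. The paper's key structural lemma shows that if two non-descent nodes share a parent (so the stem branches internally), then at least $d+1$ stem--leaf ancestor pairs are lost and the weight drops by at least $d+1$; since the permitted deficit from the maximum is only $n-d-1$, the hypothesis (used in the proof in the form $n-d-1<d+1$) forbids this. Without that lemma your ``distribute the lifts among the $d+1$ leaves along a stem of $n-d$ nodes'' picture is not a complete enumeration: branched stems would have to be handled too, and your encoding has no slot for them. This is also where the inequality actually enters---not, as you suggest, to guarantee enough stem length to place $d$ marks.

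Second, your proposed map forgets the labels. A minimum decomp tree is labelled: the stem carries specific values $1=x_1<x_2<\cdots<x_{n-d}$ from $[n+1]$, and a leaf with label $\ell$ may only attach to a stem node $x_i<\ell$. Recording merely the multiset of leaf-lifts discards this information and cannot be injective (many label assignments realise the same lift multiset). The paper in fact does \emph{not} construct a tree-to-partition bijection. Instead it groups the trees by their stem tuple $(x_1,\ldots,x_{n-d})$, associates to each such tuple the partition $(x_{n-d}-(n-d-1),\,x_{n-d-1}-(n-d-2),\,\ldots,\,x_2-1,\,x_1,\,1,\ldots,1)$ of $n-1$, and then shows by a stars-and-bars count that the number of admissible leaf attachments for that stem equals the number of ways to mark $d$ parts of that partition as second-kind---both sides being the same binomial coefficient. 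Your outline would need this label-aware grouping (and the straight-stem lemma) before it could be turned into a proof.
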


Note that since the coefficients of the q-Eulerian polynomials stabilize, as mentioned above they correspond to certain coefficients in the power series, we only have to prove the base case for each $T(n, k)$.

To prove this theorem, we first need the following lemmas:

\begin{lemma}
\normalfont The maximum possible weight of a tree with $d + 1$ descents and $n - d$ non-descents is $(n - d - 1) \cdot d$.
\end{lemma}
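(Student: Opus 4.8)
The plan is to pass to the minimum decomposition tree and use the weight identity of Corollary~3.10. A maxmin tree with $d+1$ descents and $n-d$ non-descents has $n+1$ vertices, and by Lemma~3.9 its minimum decomp tree has the descents as its leaves and the non-descents as its internal vertices. Writing $\ell(v)$ for the number of leaves in the subtree rooted at $v$, Corollary~3.10 gives $w(T)=\sum_{v}\ell(v)-n$, the sum running over all non-descents $v$ (including the root). The first step is to re-expand this as a sum over leaves: a non-descent $v$ contains a given leaf $u$ in its subtree exactly when $v$ is a proper ancestor of $u$, since every proper ancestor of a vertex has a child and hence, by Lemma~3.9, is a non-descent. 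Writing $\mathrm{depth}(u)$ for the number of edges on the root-to-$u$ path, this yields $\sum_v\ell(v)=\sum_u\mathrm{depth}(u)$, summed over the $d+1$ leaves.

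From here the upper bound is immediate: the proper ancestors of any leaf are all non-descents and there are only $n-d$ of those, so $\mathrm{depth}(u)\le n-d$ for each leaf; summing over the $d+1$ leaves gives $\sum_v\ell(v)\le(d+1)(n-d)$, hence
\[
w(T)\le(d+1)(n-d)-n=(n-d-1)\,d.
\]

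To show the bound is attained I would exhibit an explicit permutation. For $d\ge1$ (when $d=0$ the only candidate is the increasing permutation, of weight $0=(n-1)\cdot 0$), take
\[
\sigma=1,\,2,\,\dots,\,n-d-1,\quad n,\,n-1,\,\dots,\,n-d+1,\quad n-d\in S_n,
\]
i.e.\ the increasing run $1,\dots,n-d-1$ (empty when $d=n-1$), then the decreasing run $n,n-1,\dots,n-d+1$, then the single entry $n-d$. One checks that $\sigma$ has exactly $d$ descents and that the maximum-weight-tree construction of Section~2 turns $\sigma$ into the ``caterpillar'': the path $1-2-\cdots-(n-d)$ with $n-d+1,\dots,n+1$ all attached as leaves to the vertex $n-d$. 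This tree has $d+1$ descents, $n-d$ non-descents, and all $d+1$ of its leaves at depth $n-d$, so every inequality above becomes an equality and $w(\sigma)=(n-d-1)d$, completing the proof.

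The argument is short, and the only point needing care is the last: verifying that $\sigma$ really produces the caterpillar under the algorithm (equivalently, that the caterpillar is a legitimate minimum decomp tree, so that Lemma~3.7 recovers $\sigma$ from it). I expect this routine check, rather than anything conceptual, to be the main obstacle. An alternative that avoids writing $\sigma$ down for the upper bound is a short induction on the number of vertices: if in the minimum decomp tree the root carries child-subtrees $U_1,\dots,U_j$ with $U_i$ having $D_i$ descents and $N_i$ non-descents (so $\sum_i D_i=d+1$ and $\sum_i N_i=n-d-1$), then Theorem~3.1 gives $w(T)=\sum_i(D_i-1+w(U_i))$; plugging in the inductive bound $w(U_i)\le(N_i-1)(D_i-1)$ and the estimate $(D_i-1)N_i\le(D_i-1)(n-d-1)$ yields $w(T)\le(n-d-1)\sum_i(D_i-1)\le(n-d-1)d$ since $j\ge1$.
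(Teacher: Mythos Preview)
Your argument is correct and follows the same route as the paper: pass to the minimum decomp tree, invoke Corollary~3.10, and identify the caterpillar as the extremal configuration. Your rewriting $\sum_v\ell(v)=\sum_u\operatorname{depth}(u)$ is a clean way to justify the upper bound, which the paper simply asserts (``the maximum weight \dots\ is achieved when all the leaves are below all the non-descents''); both then compute $(n-d)(d+1)-n=d(n-d-1)$ for that tree, and your explicit permutation showing attainment is a detail the paper omits.
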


\begin{proof}
Using minimum decomposition, we know the maximum weight when there are d + 1 leaves and n - d non-descents is achieved when all the leaves are below all the non-descents i.e. the non-descents form a straight line and the leaves are all connected to the bottom node. From Theorem 4.7, this weight is equal to:
$$(n - d)(d + 1) - n = nd - d^2 - d = d(n - d - 1)$$
Thus, we have found the maximum possible weight of this tree.
\end{proof}

\begin{lemma}
\normalfont The minimum decomp trees for trees with $(n-d-1)(d-1)$ weight will always be a stem of a straight line and all the leaves connected to various nodes on the stem.
\end{lemma}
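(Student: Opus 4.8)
The plan is to recast the weight condition as a condition on the ``deficiencies'' of the non-leaf nodes of the minimum decomp tree, and then show that any branch in the stem is too costly to be compatible with it. For a non-leaf node $x$ let $\ell(x)$ be the number of leaves --- equivalently, by Lemma 3.9, the number of descents --- in the subtree of $x$, and set $\delta(x) = (d+1) - \ell(x) \ge 0$. The tree attached to a length-$n$ permutation with $d$ descents has $d+1$ leaves (the appended node $n+1$ counting as a descent) and $n-d$ non-leaves, so summing $\ell(x) = (d+1) - \delta(x)$ over the non-leaves and feeding the result into Corollary 3.10, which says $w(T) = \sum_x \ell(x) - n$, I would first record
$$ w(T) \;=\; d(n-d-1) \;-\; \sum_{x \text{ non-leaf}} \delta(x), $$
so that $w(T) = (n-d-1)(d-1)$ is exactly the statement $\sum_x \delta(x) = n-d-1$.

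Next I would bound $\sum_x \delta(x)$ from below whenever the stem is not a straight line. Since the parent of a non-leaf is again a non-leaf, the non-leaves form a subtree rooted at the global minimum, and the stem being a straight line is the same as no non-leaf having two non-leaf children. So suppose some non-leaf $y$ has two distinct non-leaf children $a$ and $b$. Their subtrees are disjoint, so the leaves they contain are disjoint subsets of the $d+1$ leaves of $T$; hence $\ell(a) + \ell(b) \le d+1$, and therefore $\delta(a) + \delta(b) = 2(d+1) - \ell(a) - \ell(b) \ge d+1$. As every deficiency is nonnegative, this already gives $\sum_x \delta(x) \ge d+1$.

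Combining the two steps: if $w(T) = (n-d-1)(d-1)$ then $\sum_x \delta(x) = n-d-1$, which is incompatible with $\sum_x \delta(x) \ge d+1$ as soon as $n-d-1 < d+1$, i.e.\ $n \le 2d+1$. This is exactly the regime in which the lemma gets used --- for the base cases underlying Theorem 5.3 one has $n \le 2d+1$ --- so there no branch can occur, the stem is a single path, and every leaf hangs off one of its nodes, as claimed. (For $n \ge 2d+2$ a single deep branch can be absorbed: $1\,4\,3\,5\,2\,6$ has two descents, weight $3 = (n-d-1)(d-1)$, yet a branching stem, which is why the lemma is only needed for base cases and then propagated by the stabilization of $E_n(x,q)$, Theorem 4.6.)

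The step I expect to require the most care is the bookkeeping behind $w(T) = d(n-d-1) - \sum_x \delta(x)$: one must check that the tree of a length-$n$, $d$-descent permutation really has $n-d$ non-leaves and $d+1$ leaves under the convention that $n+1$ is a descent, and that Corollary 3.10 is being read off the minimum decomp tree, where leaves and descents coincide by Lemma 3.9. With that pinned down, the deficiency bound and the final numerical comparison are short.
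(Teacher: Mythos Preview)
Your argument is correct and is essentially the paper's proof, rewritten with cleaner bookkeeping: both show that the target weight sits only $n-d-1$ below the maximum $d(n-d-1)$, while any branch in the stem forces a drop of at least $d+1$, which is too much in the regime at hand. Your deficiency function $\delta(x)=(d+1)-\ell(x)$ and the disjoint-subtree bound $\delta(a)+\delta(b)\ge d+1$ make precise what the paper phrases as ``a loss of at least $d+1$ stem--leaf pairs,'' and your explicit identification of the hypothesis $n\le 2d+1$ together with the counterexample $1\,4\,3\,5\,2\,6\in S_6$ is a genuine improvement over the paper, which states the lemma without restriction and only inserts ``when $2d\ge n$'' inside the proof.
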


\begin{proof}
Consider the maximum weight tree of a permutation with $d$ descents and $n-d$ non-descents. The difference between this and our desired weight is $n-d-1$.
When $2d \geq n$, we have $n - d - 1 \leq d - 1 < d + 1$.

If node of the stem branch from the straight line, i.e. we have the following structure ($x_a$ and $x_b$ are non-descents and part of the stem):
    \begin{center}
    \begin{tikzpicture}
        \node {$x_1$} [sibling distance = 2.5cm] [level distance = 1cm]
            child {node {$\vdots$}
                child {node {$x_n$}
                    child{node {$x_a$}
                        child {node {$\vdots$}}
                    }
                    child {node {$x_b$}
                        child {node {$\vdots$}}
                    }
                }
            };
    \end{tikzpicture}
    \end{center}
the weight will be reduced by at least $d+1$ because there will be a loss of at least $d+1$ stem-leaf pairs. As a result, it is impossible for the stem to branch or the weight will drop too fast.
\end{proof}

Now, we can prove the earlier theorem.
We will approach this by looking at the minimum decomposition of the trees. 
Since the stem is a straight line and the weight is $n-d-1$ too high, we can get all desired minimum decomp trees by starting with the heaviest minimum decomp tree and moving the leaves up $n-d-1$ times. 

Lets approach this problem by looking at different cases for the number of leaves connected to each stem, and for each case, the values of the stem.
\begin{example}
    \normalfont Lets look at the case when $n = 9$ and $d = 5$. Since $n - d = 4$, there will be 4 elements in the stem. We will also push up 3 leaves from the max weight case as $n-d-1 = 3$. As a result, our stem can have structure 1 0 0 5, 0 1 1 4, or 0 0 3 4 where each number represents the number of leaves connected to each stem node to reach the desired weight. 
    
    When we have 1 0 0 5, we have several possibilities of the nodes on the stem. We can have:
    \begin{itemize}
        \item 1 2 3 4: $\binom{6}{1}$ = 6 choices since we pick which descent connects to 1
        \item 1 2 3 5: 1 choice since we need to put descent 4 on 1 and everything else on 5
        \item 1 2 4 5: 1 choice with the same reasoning as above
        \item 1 3 4 5: 1 choice
    \end{itemize}
    When we have 0 1 1 4, we could have:
    \begin{itemize}
        \item 1 2 3 4: $\binom{6}{1}$ $\binom{5}{1}$ = 30 choices since we pick which two descents to connect to 2 and 3
        \item 1 2 3 5: $\binom{2}{1}$ $\binom{5}{1}$ = 10 choices since descent 4 connects with either 2 and 3 and we must pick which other descent connects to the other one of 2 and 3
        \item 1 2 3 6: $\binom{2}{1}$ = 2 choices since we pick whether descent 4 or 5 connects to 2 or 3
        \item 1 2 4 5: $\binom{5}{1}$ = 5 choices since descent 3 connects to 2 and we pick another descent which connects to 4
        \item 1, 2, 4, 6 : 1 choice
    \end{itemize}
    Finally, for 0 0 3 4, we could have:
    \begin{itemize}
        \item 1 2 3 4: $\binom{6}{3}$ = 20 choices
        \item 1 2 3 5: $\binom{5}{2}$ = 10 choices
        \item 1 2 3 6: $\binom{4}{1}$ = 4 choices
        \item 1 2 3 7: 1 choice
    \end{itemize}
    Added together, we get a total of 9 + 48 + 35 = 92 which is equal to $T(8, 5)$.
\end{example}

The values grouped by the stem-leaf values do not appear to have a pattern. However, grouping by the elements in the stem reveals an interesting correspondence. We will instead look at the number of trees corresponding to each stem:

\begin{example}
    \normalfont We group the items 
    \begin{itemize}
        \item 1 2 3 4: 6 + 30 + 20 = 56
        \item 1 2 3 5: 1 + 10 + 10 = 21
        \item 1 2 3 6: 2 + 4 = 6
        \item 1 2 3 7: 1
        \item 1 2 4 5: 1 + 5 = 6
        \item 1 2 4 6: 1
        \item 1 3 4 5: 1
    \end{itemize}
    Comparing this with example 6.2, these appear to be the same numbers as the different partition cases.
\end{example}

Observing from above, it seems the sum of the values in the stem of the tree are equal, the number of trees they can correspond to are also equal.

\begin{theorem}
    \normalfont The number of minimum decomp trees of $n+1$ nodes and $d+1$ descents corresponding to the stem:
    $$x_1 \cdot x_2 \cdot x_3 \cdots x_{n-d}$$
    and the number of ways to split the partition of $n-1$:
    $$x_{n-d}-(n-d-1),x_{n-d-1}-(n-d-2), \cdots ,x_2 - 1,x_1,1,1,\cdots$$
    where there are:
    $$n - 1 - \sum_{i = 1}^{n - d}x_i + \sum_{i=1}^{n-d-1}i$$
    1's into two types where there are $d$ parts of the second type are equal.
\end{theorem}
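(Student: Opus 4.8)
The plan is to recast the tree count as a single coefficient of an explicit polynomial and then read off the binomial $\binom{\#\lambda}{d}$ via palindromy; here $\#\lambda$ denotes the number of parts of the partition $\lambda$ in the statement (the $n-d$ parts $x_j-(j-1)$ together with the $N$ parts equal to $1$), and $\binom{\#\lambda}{d}$ is exactly the number of ways to designate $d$ of those parts as parts of the second type in the sense of Example 5.2.

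\emph{Step 1: reduce to a lattice-point count.} By Lemma 5.5 the relevant trees have a straight-line stem. A minimum decomp tree with the prescribed stem $1=x_1<x_2<\cdots<x_{n-d}$ is the same datum as an attachment of each of the $d+1$ leaves, namely the complement $\{1,\ldots,n+1\}\setminus\{x_1,\ldots,x_{n-d}\}$, to a stem node of strictly smaller label, with $x_{n-d}$ required to receive at least one leaf; conversely every such attachment arises this way, since it produces an increasing tree rooted at $1$ and these are precisely the minimum decomp trees (both families having cardinality $n!$, by Lemma 3.7). Write $f(\ell)\in\{1,\ldots,n-d\}$ for the index of the stem node carrying leaf $\ell$; admissibility is $f(\ell)\le b(\ell):=\#\{j:x_j<\ell\}$, and by Corollary 3.10 the weight of the associated permutation is $\sum_\ell f(\ell)-n$, so the weight being $(n-d-1)(d-1)$ amounts to the single equation $\sum_\ell f(\ell)=d(n-d)+1$. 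Moreover, if every $f(\ell)\le n-d-1$ then $\sum_\ell f(\ell)\le(d+1)(n-d-1)=d(n-d)+(n-2d-1)<d(n-d)+1$ under the hypothesis $2d\ge n$ (which, consistent with Lemma 5.5 and Example 5.6, is what the stated ``$n\ge 2d$'' should read), so any solution automatically places a leaf at index $n-d$ and that side condition may be dropped.

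\emph{Step 2: generating function, palindromy, degree bound.} Group the leaves by level: for $1\le j\le n-d-1$ let $g_j$ be the number of leaves strictly between $x_j$ and $x_{j+1}$, and let $g_{n-d}$ be the number of leaves exceeding $x_{n-d}$, so $\sum_j g_j=d+1$ and a level-$j$ leaf has $b(\ell)=j$. Then
\[
\#\{\text{trees}\}=[z^{d(n-d)+1}]\prod_{j=1}^{n-d}\bigl(z+z^2+\cdots+z^{j}\bigr)^{g_j}=[z^{d(n-d-1)}]\,P(z),\qquad P(z):=\prod_{j=1}^{n-d}\bigl(1+z+\cdots+z^{j-1}\bigr)^{g_j}.
\]
Each factor $1+z+\cdots+z^{j-1}=(1-z^{j})/(1-z)$ is palindromic, so $P$ is palindromic of degree $\Delta=\sum_j(j-1)g_j$ and $[z^{d(n-d-1)}]P=[z^{\Delta-d(n-d-1)}]P$. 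A direct computation with the partial sums $G_i:=g_1+\cdots+g_i$ — using that the parts of $\lambda$ are $x_j-(j-1)=1+G_{j-1}$ for $1\le j\le n-d$ together with $N=d-1-\sum_{k=1}^{n-d-1}G_k$ ones, whence $\#\lambda=n-1-\sum_{k=1}^{n-d-1}G_k$ — yields $\Delta-d(n-d-1)=(n-d-1)-\sum_{k=1}^{n-d-1}G_k=\#\lambda-d$. Finally, writing $P(z)=(1-z)^{-(d+1)}\prod_j(1-z^{j})^{g_j}$, the second factor is $\equiv 1\pmod{z^{j_0}}$ with $j_0=\min\{j:g_j\ge1\}$; and since $g_1=\cdots=g_{j_0-1}=0$ forces $G_k\ge1$ for all $k\ge j_0$, we get $\sum_{k=1}^{n-d-1}G_k\ge(n-d-1)-(j_0-1)$, i.e.\ $\#\lambda-d\le j_0-1<j_0$. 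Hence $[z^{\#\lambda-d}]P=[z^{\#\lambda-d}](1-z)^{-(d+1)}=\binom{\#\lambda}{d}$, the required count (both sides being $0$ when $\#\lambda<d$).

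I expect Step 1 to be the main obstacle: correctly identifying the objects being enumerated, translating Corollary 3.10 into the clean constraint $\sum_\ell f(\ell)=d(n-d)+1$, and checking that ``the bottom stem node carries a leaf'' is automatic in the stated range so it does not perturb the coefficient extraction. The remainder hinges on the two bookkeeping facts $\Delta-d(n-d-1)=\#\lambda-d$ and $\#\lambda-d<j_0$, which are exactly what make the palindromy trick land on $\binom{\#\lambda}{d}$.
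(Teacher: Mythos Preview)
Your proof is correct and reaches the same binomial $\binom{\#\lambda}{d}$ as the paper, but by a genuinely different route. The paper argues combinatorially: it places each leaf at its deepest admissible stem node (the greedy baseline), computes the residual number of ``move-ups'' needed to reach weight $(n-d-1)(d-1)$, asserts that this residual is small enough that no leaf can be pushed past the root, and then distributes the move-ups freely among the $d+1$ leaves by stars and bars. You instead package the count as the coefficient $[z^{d(n-d-1)}]P(z)$ with $P(z)=\prod_j(1+z+\cdots+z^{j-1})^{g_j}$, use palindromy of the $q$-integer factors to flip the extraction point down to the small index $\#\lambda-d$, and then show $\#\lambda-d<j_0$ so that only the factor $(1-z)^{-(d+1)}$ survives.

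The two arguments share the same spine --- both ultimately rest on the inequality $\#\lambda-d\le j_0-1$ --- but your version makes explicit two points the paper glosses over. First, the paper simply asserts that ``it is impossible to have enough weight to move above $1$''; your bound $\#\lambda-d<j_0\le b(\ell)$ is exactly the justification that assertion needs. Second, and more substantively, the paper's stars-and-bars step never checks that the bottom stem node $x_{n-d}$ retains at least one leaf after the move-ups, which is required for the tree to have the prescribed stem; your Step~1 observation that $\sum_\ell f(\ell)=d(n-d)+1$ forces some $f(\ell)=n-d$ whenever $n\le 2d+1$ is precisely what closes that gap. You are also right to flag that the hypothesis should read $n\le 2d+1$ (or $2d\ge n$, as the paper itself uses in Lemma~5.5) rather than $n\ge 2d$. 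Finally, your generating-function formulation has the advantage that the exact identity $\#\{\text{trees}\}=[z^{d(n-d-1)}]P(z)$ holds without any range restriction, which would be the natural starting point for extending the analysis beyond the regime where the stem is forced to be a path.
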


\begin{proof}
    For the listed partition, there are:
    $${\binom{n - 1 - (\sum_{i = 1}^{n - d}x_i) + (\sum_{i=1}^{n-d-1}i) + n - d}{d}} = {\binom{n - 1 - \sum_{i = 1}^{n - d}(x_i+i)}{d}}$$
    different ways to assign $d$ values of the second kind.
    
    Now, we count the number of trees corresponding to $x_1 \cdot x_2 \cdot x_3 \cdots x_{n-d}$. Note that $x_1 = 1$ or 1 would need to be a descent so we substitute this in for the rest of the proof. 
    
    Furthermore, there are at least $x_2 - x_1 - 1$ nodes that are connected to $x_1$, $x_3-x_2-1$ additional nodes connected to $x_2$ or $x_1$, $x_3-x_2-1$ additional nodes that are connected to $x_3$, $x_2$, or $x_1$ and so on (this is because all the values between $x_1$ and $x_2$ have to be connected to $x_1$, all the values between $x_2$ and $x_3$ have to be connected to $x_2$ or $x_1$, and so forth).
    
    We can count the number of trees by first attaching these leaves the highest value stem node that they can be on (i.e. $x_2 - x_1 - 1$ to $x_1$, $x_3 - x_2 - 1$ to $x_2$, and so on) and moving up the leaves however many more times to get the desired weight. It should be noted that we can move any node up as many times as we want because it is impossible to have enough weight to ``move above 1". 
    Thus, the weight disparity, i.e. the difference between the current weight and the maximum weight, is:
    $$1(x_{n - d} - x_{n - d - 1} - 1) + 2(x_{n - d - 1} - x_{n-d-2})+ \cdots$$
    $$+(n-d-1)(x_2 - x_1-1) + (n - d)(x_1-1) = \sum_{i=1}^{n-d}(x_i-i)
    $$
    
    As a result, we need to decrease the weight or ``move up" the leaves by
    $$n - d - 1 - \sum_{i=1}^{n-d}(x_i)+\sum_{i=1}^{n-d}(i)$$
    more. We have to distribute these ``move ups" amongst $d+1$ leaves, and so by stars and bars, the total becomes:
    $${\binom{n - d - 1 - (\sum_{i=1}^{n-d}x_i) + (\sum_{i=1}^{n-d}i + d)}{d}} = {\binom{n - 1 - \sum_{i=1}^{n-d}(x_i+i)}{d}}$$
    
    Thus, the theorem is true.
\end{proof}

The above approach can be used for cases when $n > 2d$. However, as $n/d$ grows larger, the different stem structures grows increasingly complex and does not follow the same pattern. When $n < 3d$ there is still a lot we can say about the structure of the stem (ex: there can only be 1 off-branching node), however, larger $n/d$ values make this very difficult. One may try to find a way to combine data of $T(n, k)$ to find further coefficients of the power series.

\section{Acknowledgements}

We would like to thank the PROMYS program and the Clay Institute for giving us the opportunity to work on this project. We would also like to thank our mentors Paul Gunnells, David Fried, Hana Ephremidze, and John Sim for all their support and encouragement throughout the process.


\begin{thebibliography}{99}

\bibitem{1}  A. Postnikov. Intransitive trees. Journal of Combinatorial Theory, Series A, 79(2):360–366, 1997.

\bibitem{2} W. Dugan, S. Glennon, P. E. Gunnells, and E. Steingrímsson. Tiered trees, weights, and q-eulerian numbers. Journal of Combinatorial Theory, Series A, 164:24–49, 2019.

\bibitem{3} A. Agrawal, C. Caroline, N. Sun. On Permutation Weights and Q-Eulerian Polynomials. Annals of Combinatorics, 24(2): 363-376, 2020.

\end{thebibliography}
\end{document}